\definecolor{darkred}{RGB}{160,0,0}
\definecolor{darkblue}{RGB}{0,0,160}
\newcommand{\excise}[1]{}
\theoremstyle{definition}
\newtheorem{thm}{Theorem}
\newtheorem*{thm*}{Theorem}
\newtheorem{lem}[thm]{Lemma}
\newtheorem{prop}[thm]{Proposition}
\newtheorem{conj}[thm]{Conjecture}
\newtheorem{example}[thm]{Example}
\newtheorem{remark}[thm]{Remark}
\numberwithin{equation}{section}
\newcommand{\ring}[1]{\ensuremath{\mathbb{#1}}}
\renewcommand{\>}{\rangle}
\newcommand{\<}{\langle}
\newcommand{\NN}{\ring{N}}
\newcommand{\QQ}{\ring{Q}}
\newcommand\defas{\coloneqq}
\newcommand\set[1]{\{#1\}}
\newcommand\abs[1]{|#1|}
\DeclareMathOperator{\maps}{\rightarrow} 
\DeclareMathOperator{\supp}{supp} 
\DeclareMathOperator{\Span}{span} 
\DeclareMathOperator{\sgn}{sgn} 
\begin{document}

\selectlanguage{english}

\title{Short polynomials in determinantal ideals}

\author{Thomas Kahle and Finn Wiersig}


\date{\today}

\keywords{determinantal ideal, sparse polynomial}

\thanks{The authors were supported by the German Research Foundation DFG --
  314838170, GRK~2297 MathCoRe.}

\makeatletter
\@namedef{subjclassname@2020}{\textup{2020} Mathematics Subject Classification}
\makeatother
\subjclass[2020]{14M12, 14Q20, 15A15}


\begin{abstract}
  We show that a determinantal ideal generated by $t$-minors does not
  contain any nonzero polynomials with $t!/2$ or fewer terms.
  Geometrically this means that any nonzero polynomial vanishing on
  all matrices of rank at most $t-1$ has more than $t!/2$ terms.
\end{abstract}

\maketitle



\section{Introduction}

In many areas of computational mathematics sparsity is an essential
feature used for complexity reduction.  Sparse mathematical objects
often allow more compact data structures and more efficient
algorithms.  We are interested in sparsity, that is having few terms,
as a complexity measure for polynomials, augmenting the usual degree
based complexity measures such as the Castelnuovo--Mumford regularity.

Sparsity based complexity applies to geometric objects too.  If
$X\subset K^{n}$ is a subset of affine $K$-space, one can ask for the
sparsest polynomial that vanishes on~$X$.  A monomial vanishes on $X$
if and only if $X$ is contained in the union of the coordinate
hyperplanes.  That $X$ is cut out by binomials can be characterized
geometrically using the log-linear geometry of binomial
varieties~\cite[Theorem~4.1]{eisenbud96:_binom_ideal}.  Algorithmic
tests for single binomials vanishing on $X$ are available both
symbolically~\cite{JKK17} and
numerically~\cite{hauenstein2021binomiality}.  We ask for the
\emph{shortest} polynomial vanishing on $X$, or algebraically, the
shortest polynomial in an ideal of the polynomial ring.  The shortest
polynomials contained in (principal) ideals of a univariate polynomial
ring have been considered in \cite{giesbrecht2010computing}.
Computing the shortest polynomials of an ideal in a polynomial ring
seems to be a hard problem with an arithmetic flavor.  Consider
Example~2 from \cite{JKK17}: For any $n\in\NN$, let
$I_n=\<(x-z)^2,nx-y-(n-1)z\>\subseteq\QQ[x,y,z]$. The ideals $I_n$ all
have Castelnouvo-Mumford regularity 2 and are primary over
$\<x-z,y-z\>$.  Then $I_{n}$ contains the binomial $x^n-yz^{n-1}$ and
there is no binomial of degree less than $n$ in~$I_{n}$.  This means
that the syzygies and also primary decomposition carry no information
about shortness.  It is unknown to the authors if a Turing machine can
decide if an ideal contains a polynomial with at most $t$ terms.

In this note we show that no short polynomials vanish on the set of
fixed rank matrices.
\begin{thm*}
  For $t\le m,n$, let $X_{t-1} \subset K^{m\times n}$ be the set of
  $m\times n$-matrices of rank at most~$t-1$ over a field~$K$.  There
  is no nonzero polynomial with $t!/2$ or fewer terms vanishing on all
  of~$X_{t-1}$.
\end{thm*}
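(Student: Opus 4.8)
The plan is to translate the geometric statement into ideal membership and then prove a lower bound on the number of terms by induction on $t$. Since the determinantal ideal $I_t$ generated by the $t$-minors of the generic matrix $(x_{ij})$ is prime, hence radical, over any field, the vanishing ideal $I(X_{t-1})$ equals $I_t$ (taking $K$ infinite, or working over $\ol{K}$, which does not change the number of terms of a polynomial). So it suffices to show that every nonzero $f\in I_t$ has more than $t!/2$ terms; write $\#f$ for that number. Before inducting I would normalize $f$ using the torus $T=(K^\times)^m\times(K^\times)^n$ acting by $x_{ij}\mapsto s_it_jx_{ij}$: each $t$-minor is a $T$-eigenvector, so $I_t$ is $T$-stable and the $T$-weight of a monomial is exactly its vector of row- and column-degrees. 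Monomials of distinct weight are disjoint, so passing to a single nonzero weight component of $f$ only decreases $\#f$ and stays inside $I_t$; hence I may assume $f$ is multihomogeneous, which is what makes the slicing below well behaved. As a sanity check on the target constant, in the square case $m=n=t$ the ideal is principal, $f=g\det$, and $\new(f)=\new(g)+\new(\det)$; since $\new(\det)$ is the Birkhoff polytope whose $t!$ vertices are the permutation matrices, and vertices of a Minkowski sum survive with nonzero coefficient, $f$ already has at least $t!$ terms there.

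The engine of the induction is that taking a monomial coefficient (a characteristic-free substitute for differentiation) lowers $t$ by one. Fix a row $i_0$ and expand $f=\sum_\mu \mu\,f_\mu$, where $\mu$ runs over the monomials in the row-$i_0$ variables $\{x_{i_0 j}\}_j$ and $f_\mu\in K[x_{ij}:i\neq i_0]$. The monomials for distinct $\mu$ are disjoint, so $\#f=\sum_\mu\#f_\mu$. The key lemma is that each $f_\mu$ lies in $I_{t-1}(X')$, where $X'$ is the $(m-1)\times n$ matrix obtained by deleting row $i_0$: this follows generator by generator from Laplace expansion, since a $t$-minor using row $i_0$ expands as $\sum_j \pm x_{i_0 j}\,M'$ with $M'$ a $(t-1)$-minor of $X'$, while a $t$-minor avoiding row $i_0$ is itself a minor of $X'$. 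By the inductive hypothesis every nonzero slice satisfies $\#f_\mu>(t-1)!/2$, so
\[
  \#f=\sum_{\mu:\,f_\mu\neq 0}\#f_\mu \;>\; b\cdot\frac{(t-1)!}{2},
\]
where $b$ is the number of distinct row-$i_0$ monomials occurring in $f$.

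Everything thus reduces to a \emph{branching lower bound}: I must choose $i_0$ so that $b$ is large, and I expect $b\gtrsim t$ to be precisely what yields $t!/2$. This is the main obstacle. It fails for an arbitrary row — a single minor is independent of some rows, forcing $b=1$ there — so the row must be chosen adaptively, and one must exclude the pathology that every row carries only a few distinct monomials. Here the constraints from $f\in I_t$ are decisive: $f$ must involve at least $t$ rows and at least $t$ columns (else $f$ lies in the $t$-minor ideal of a matrix with fewer than $t$ rows or columns, which is zero, as one sees by zeroing the unused rows), and $f$ cannot be a monomial times anything, since $I_t$ is prime and not contained in a coordinate hyperplane. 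The heart of the argument is to combine multihomogeneity with these constraints to produce one row along which roughly $t$ distinct monomials appear. I note the tension that drives the constant: if one could always guarantee $b\ge t$, the recursion $\#f\ge t\cdot N(t-1)$ with the base $N(1)\ge 1$ would give the stronger bound $t!$, matching the square case; the fact that the theorem asserts only $t!/2$ signals that the worst-case branching is genuinely weaker, and pinning down that worst case honestly is where I expect the real work to lie.

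The base cases are immediate: for $t=1$ any nonzero $f$ has a term, and for $t=2$ non-monomiality forces $b\ge 2$ for some row, whence $\#f\ge 2>1=2!/2$. The induction then propagates the bound, modulo the branching estimate identified above as the crux.
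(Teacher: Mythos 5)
Your reduction to ideal membership, the multigrading normalization, and the slicing lemma (that each row-$i_0$ monomial coefficient $f_\mu$ of an $f\in I_t$ lies in $I_{t-1}$ of the matrix with row $i_0$ deleted, via Laplace expansion) are all correct, and this is a genuinely different route from the paper's. But the argument has a genuine gap, which you yourself flag as ``the crux'': the branching lower bound. To get from the inductive hypothesis $\#f_\mu>(t-1)!/2$ to $\#f>t!/2$ you need some row along which at least $t$ distinct monomials occur (you verify that $b=t-1$ is numerically insufficient), and nothing you prove delivers this. The constraints you do establish --- $f$ involves at least $t$ rows and $t$ columns, $f$ has no monomial factor --- only force $b\ge 2$ for some row, which is why your induction stops working after $t=2$. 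It is not even clear that the needed bound is true in the stated form: a Newton-polytope projection argument settles the square case (where $I_t$ is principal and $\new(f)=\new(g)+\new(\det)$ projects onto a translate of a $(t-1)$-simplex in each row), but for $m,n>t$ the ideal is not principal, cancellation between different minors is possible, and no such argument is offered. As it stands the proposal proves the theorem only for $t\le 2$.

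For comparison, the paper's proof does not induct on $t$ at all. It works in the space of coefficient functionals: for each exponent $\alpha$ it writes down the linear form $p_\alpha$ extracting the $x^\alpha$-coefficient from any expression $\sum g_{I,J}\det(X_{I,J})$, and shows (Lemma~\ref{lem:criterionTK}) that no $s$-short polynomial exists iff every $p_\beta$ lies in the span of the $p_\alpha$ with $\alpha$ ranging over any set omitting only $s$ exponents. The relation $p_\beta=-\sum_{\alpha}p_\alpha$ is then built by a breadth-first pairing: each variable $y_{(I,J),\alpha-E_\sigma}$ in $p_\alpha$ carries the sign $\sgn(\sigma)$, and since exactly $t!/2$ permutations have each sign, one can always cancel it against a partner of opposite sign chosen inside the allowed set $S$ when at most $t!/2$ exponents are forbidden. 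So the constant $t!/2$ comes from the index of the alternating group in $S_{I,J}$, not from a worst-case branching factor; your heuristic reading of the constant as evidence that ``worst-case branching is genuinely weaker'' than $t$ does not reflect the actual source of the bound (indeed the paper conjectures the true shortness is $t!$). If you want to pursue your slicing approach, the missing branching estimate is where all the work lies.
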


In the rest of the introduction we fix notation.
Section~\ref{sec:short-and-linearforms} lays the foundations of our
approach.  We consider the space of coefficients in the monomial
basis.  Searching for a short polynomial is searching for many
simultaneously vanishing coefficients.  In
Section~\ref{sec:shortPoly-in-detIdeals} we specialize this to
determinantal ideals and prove the theorem.

\subsection*{Notation and conventions}

Let $K$ be a field and $R = K[x_1,\dots,x_n]$ the polynomial ring in
$n$ indeterminates with coefficients in~$K$.  The \emph{support} of
$f=\sum_{\alpha \in \NN^{n}}f_{\alpha}x^{\alpha}$ is
$\supp(f) = \set{\alpha\colon f_{\alpha}\neq 0}$.  A nonzero
polynomial $f\in R\setminus\set{0}$ is \emph{$t$-short} if it has at
most $t$ terms, that is, if $|\supp(f)| \le t$.  The \emph{shortness}
of an ideal $I\subset R$ is the minimal $t$ such that $I$ contains a
$t$-short polynomial.
We think of the shortness of an ideal as an important complexity
measure and aim to develop methods to determine it.

We write $\NN$ for the natural numbers without zero and
$\NN_0:=\NN\cup\set{0}$.  For any tuple $a=(a_1,\dots,a_n)\in\NN_0^n$,
let $|a|:=\sum_{i=1}^na_i$.  The set of exponents of monomials of
degree $d\in\NN_0$ in $K[x_1,\dots,x_n]$ is
$M_d^n:=\set{\alpha\in\NN_0^n | |\alpha|=d}$.  In a polynomial ring
$K[X]$, where $X$ denotes an $m\times n$-matrix of variables, the set
of exponents of monomials of degree $d\in\NN_0$ is $M_d^{m\times n}$.
Finally, for any $n\in\NN$, we abbreviate $[n]:=\set{1,\dots,n}$.


\section{Short polynomials and relations between linear forms}
\label{sec:short-and-linearforms}

To understand short polynomials in a homogenous ideal we study their
expressions in terms of fixed generators for the ideal.  A short
polynomial must produce many cancellations which we aim to detect
systematically.  To describe the general idea, let
$f_1,\dots,f_r\in K[x_1,\dots,x_n]$ be homogenous forms of degree~$t$.
In the monomial basis we write
$f_i=\sum_{\beta\in M_t^{n}}f_{i,\beta}x^{\beta}$ for all
$i=1,\dots,r$.  Fix a number $d\in\NN_0$.  We aim to compute the
smallest number of terms of a nonzero polynomial in $I^{(t+d)}$,
the degree $(t+d)$ homogeneous component of the ideal
$I=\<f_1,\dots,f_r\>$.

For every exponent $\alpha\in M_{t+d}^{n}$, consider the linear map
\begin{equation*}
\tilde{p}_{\alpha} \colon I^{(t+d)} \maps K,
\end{equation*}
sending a polynomial
$g=\sum_{\alpha\in M_{t+d}^{n}}g_{\alpha}x^{\alpha}\in I^{(t+d)}$ to
the coefficient $g_{\alpha}$ of~$x^{\alpha}$.  We would like to
understand how many of these maps $\tilde{p}_{\alpha}$ can vanish
simultaneously on one polynomial~$g$.  It is useful to pull this
information back to the coefficients as follows.  Consider the
surjective linear map
\begin{equation*}
  \epsilon^{(t+d)} \colon \bigoplus_{i=1}^{r}K[x_1,\dots,x_n]^{(d)} \maps I^{(t+d)}, \qquad
  (g_1,\dots,g_r) \mapsto \sum_{i=1}^{r}g_{i}f_{i}.
\end{equation*}
Now for every exponent $\alpha\in M_{t+d}^{n}$ let $p_{\alpha}$ be the
composition of $\epsilon^{(t+d)}$ and $\tilde{p}_{\alpha}$.  That is,
$p_{\alpha}$ is a linear form that makes the following diagram
commutative:
\begin{equation*}
\begin{tikzcd}
  \bigoplus_{i=1}^{r}K[x_1,\dots,x_n]^{(d)} \arrow{r}{\epsilon^{(t+d)}}\arrow{rd}{p_{\alpha}} & I^{(t+d)} \arrow{d}{\tilde{p}_{\alpha}} \\
                                                                                          & K
\end{tikzcd}.
\end{equation*}

The linear forms $p_{\alpha}$ extract the coefficients of a polynomial
in $I^{(t+d)}$ with respect to the chosen decomposition
$\sum_{i=1}^{r} f_{i}g_{i}$ in the monomial basis.  This can be
summarized as follows.
\begin{lem}\label{lem:p-alpha}
  The evaluation $p_{\alpha}(g_1,\dots,g_r)$ equals the coefficient of
  $x^{\alpha}$ for each polynomial expression
  $\sum_{i=1}^{r}g_{i}f_{i}\in I^{(t+d)}$.  In total, 
  \begin{equation*}
    I^{(t+d)} = \left\{\sum_{\alpha\in M_{t+d}^{n}}p_{\alpha}(g_1,\dots,g_r)x^{\alpha} \colon (g_1,\dots,g_r)\in\bigoplus_{i=1}^{r}K[x_1,\dots,x_n]^{(d)}\right\}.
  \end{equation*}
\end{lem}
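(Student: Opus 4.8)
The plan is to observe that both assertions are essentially a bookkeeping exercise that unwinds the definition of $p_{\alpha}$ through the commutative diagram. For the first claim I would argue directly: by construction $p_{\alpha} = \tilde{p}_{\alpha} \circ \epsilon^{(t+d)}$, so for any tuple $(g_1,\dots,g_r)$ in $\bigoplus_{i=1}^{r} K[x_1,\dots,x_n]^{(d)}$ we have $p_{\alpha}(g_1,\dots,g_r) = \tilde{p}_{\alpha}(\sum_{i=1}^{r} g_i f_i)$. Since $\tilde{p}_{\alpha}$ is by definition the functional extracting the coefficient of $x^{\alpha}$, this value is exactly the coefficient of $x^{\alpha}$ in the polynomial $\sum_{i=1}^{r} g_i f_i$, which is the first assertion.

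For the set equality I would prove the two inclusions separately, using the first claim together with the surjectivity of $\epsilon^{(t+d)}$ onto $I^{(t+d)}$. For ``$\supseteq$'', note that for any tuple the polynomial $\sum_{\alpha \in M_{t+d}^{n}} p_{\alpha}(g_1,\dots,g_r) x^{\alpha}$ reassembles, coefficient by coefficient, to $\sum_{i=1}^{r} g_i f_i$, which lies in $I^{(t+d)}$ by the very definition of $\epsilon^{(t+d)}$. For ``$\subseteq$'', I would take any $g \in I^{(t+d)}$, use surjectivity to choose a preimage $(g_1,\dots,g_r)$ with $\epsilon^{(t+d)}(g_1,\dots,g_r) = g$, and then invoke the first claim to rewrite each coefficient $g_{\alpha}$ as $p_{\alpha}(g_1,\dots,g_r)$, producing the required expression for $g$.

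I do not expect any genuine obstacle here, since the statement is close to tautological once the diagram is set up; the only point that deserves a word is the surjectivity of $\epsilon^{(t+d)}$, which is already asserted in the construction and in any case follows from homogeneity. Indeed, writing an element of $I^{(t+d)}$ as $\sum_{i=1}^{r} h_i f_i$ with arbitrary multipliers $h_i$ and passing to the degree-$d$ homogeneous component of each $h_i$ yields multipliers $g_i \in K[x_1,\dots,x_n]^{(d)}$ with $\sum_{i=1}^{r} g_i f_i = g$, because each $f_i$ is homogeneous of degree $t$ and every other graded piece of $h_i$ contributes only in degrees different from $t+d$.
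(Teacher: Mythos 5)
Your argument is correct and is exactly the unwinding of the definitions that the paper leaves implicit (the lemma is stated without proof as a summary of the preceding construction). Your extra remark justifying the surjectivity of $\epsilon^{(t+d)}$ by passing to degree-$d$ homogeneous components of arbitrary multipliers is a sound and welcome detail, but the approach is the same.
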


For every $i=1,\dots,r$ let
$e_i\in\bigoplus_{i=1}^{r}K[x_1,\dots,x_n]$ denote the tuple that is
$1$ in the $i$-th entry and $0$ everywhere else.  Then
$\set{x^{\gamma}e_i : i=1,\dots,r, \, \gamma\in M_d^{n}}$ is a basis
of $\bigoplus_{i=1}^{r}K[x_1,\dots,x_n]^{(d)}$ as a vector space.  Let
\begin{equation*}
  \set{y_{i,\gamma} : i=1,\dots,r,\, \gamma\in M_d^{n}}
\end{equation*}
be the dual basis of
$\left(\bigoplus_{i=1}^{r}K[x_1,\dots,x_n]^{(d)}\right)^{*}$.  The
following lemma gives a concrete description of the $p_{\alpha}$ with
respect to this basis.  For this, generators $f_{1},\dots, f_{r}$ of
degree $t$ are fixed, as they need to be to just define
the~$p_{\alpha}$.

\begin{lem}\label{lem:palpha-formula}
  For every $\alpha\in M_{t+d}^{n}$ we have
  \begin{equation*}
    p_{\alpha}=\sum_{i=1}^{r}\sum_{\substack{\beta\in M_t^{n},\gamma\in M_d^{n}\\\beta+\gamma=\alpha}}f_{i,\beta}y_{i,\gamma}.
 \end{equation*}
\end{lem}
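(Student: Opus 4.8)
The plan is to establish the formula by direct computation, since $p_{\alpha}$ is by construction the functional reading off the coefficient of $x^{\alpha}$ from a product of the given shape. Both the asserted expression and $p_{\alpha}$ are linear forms on $\bigoplus_{i=1}^{r}K[x_1,\dots,x_n]^{(d)}$, so it suffices to show that they take the same value on an arbitrary tuple $(g_1,\dots,g_r)$. First I would record how the dual basis acts: writing $g_i=\sum_{\gamma\in M_d^{n}}g_{i,\gamma}x^{\gamma}$, we have $(g_1,\dots,g_r)=\sum_{i,\gamma}g_{i,\gamma}\,x^{\gamma}e_i$, and hence $y_{i,\gamma}(g_1,\dots,g_r)=g_{i,\gamma}$ by the defining property of the dual basis.

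Next I would expand the relevant product. By Lemma~\ref{lem:p-alpha} the value $p_{\alpha}(g_1,\dots,g_r)$ is the coefficient of $x^{\alpha}$ in $\sum_{i=1}^{r}g_if_i$. Substituting the monomial expansions of the $g_i$ and of the $f_i=\sum_{\beta\in M_t^{n}}f_{i,\beta}x^{\beta}$ gives
\[
  \sum_{i=1}^{r}g_if_i=\sum_{i=1}^{r}\sum_{\beta\in M_t^{n}}\sum_{\gamma\in M_d^{n}}f_{i,\beta}\,g_{i,\gamma}\,x^{\beta+\gamma}.
\]
Reading off the coefficient of a fixed $x^{\alpha}$ retains exactly those summands with $\beta+\gamma=\alpha$, so that
\[
  p_{\alpha}(g_1,\dots,g_r)=\sum_{i=1}^{r}\sum_{\substack{\beta\in M_t^{n},\gamma\in M_d^{n}\\\beta+\gamma=\alpha}}f_{i,\beta}\,g_{i,\gamma}.
\]
Finally I would substitute $g_{i,\gamma}=y_{i,\gamma}(g_1,\dots,g_r)$ into this expression; since the resulting identity holds for every tuple, the two linear forms agree, which is precisely the claim.

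I do not expect a genuine obstacle here, as the whole argument is bookkeeping around the distributive law together with the duality pairing. The only points demanding care are the indexing conventions: one must keep the summation index $i$ attached to the correct direct summand when passing to the dual basis, and verify that the convolution constraint $\beta+\gamma=\alpha$ is consistent with the degree count $|\beta|=t$, $|\gamma|=d$, $|\alpha|=t+d$, so that no spurious terms are collected and no valid terms are dropped.
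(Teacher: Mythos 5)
Your proof is correct and follows essentially the same route as the paper: evaluate both linear forms on an arbitrary tuple, expand $\sum_i g_i f_i$ in the monomial basis, and read off the coefficient of $x^{\alpha}$. The only difference is that you make the dual-basis identity $y_{i,\gamma}(g_1,\dots,g_r)=g_{i,\gamma}$ explicit, which the paper leaves implicit.
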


\begin{proof}
  Fix $(g_1,\dots,g_r)\in\bigoplus_{i=1}^{r}K[x_1,\dots,x_n]^{(d)}$,
  write $g_{i}=\sum_{\alpha\in M_d^{n}}g_{i,\alpha}x^{\alpha}$ in the monomial
  basis for all $i=1,\dots,r$, and calculate
\begin{align*}
  p_{\alpha}(g_1,\dots,g_r) &= \tilde{p}_{\alpha}\left(\epsilon^{(t+d)}(g_1,\dots,g_r)\right) \\
                          &= \tilde{p}_{\alpha}\left(\sum_{i=1}^{r}g_if_i\right) \\
                          &= \tilde{p}_{\alpha}\left(\sum_{i=1}^r\left(\sum_{\alpha\in M_d^{n}}g_{i,\alpha}x^{\alpha}\right)\left(\sum_{\beta\in M_t^{n}}f_{i,\beta}x^{\beta}\right)\right)\\
                          &= \tilde{p}_{\alpha}\left(\sum_{\alpha\in M_{t+d}^{n}}\sum_{i=1}^r\sum_{\substack{\beta\in M_t^{n},\gamma\in\NN_d^{n}\\\beta+\gamma=\alpha}} f_{i,\beta}g_{i,\gamma}x^{\alpha}\right) \\
                          &= \sum_{i=1}^r\sum_{\substack{\beta\in M_t^{n},\gamma\in\NN_d^{n}\\\beta+\gamma=\alpha}} f_{i,\beta}g_{i,\gamma}. \qedhere
\end{align*}
\end{proof}

\begin{example}\label{ex:monomialId}
  Consider the monomial ideal
  $\<x_{1},x_{2}\>\subset \QQ[x_{1},x_{2}]$.  The linear forms
  $p_{\alpha}$ depend only on the chosen generating set
  $f_{1} = x_{1}, f_{2} = x_{2}$ of the ideal.  For degree two (that
  is, $t=d=1$), these linear forms are contained in the 4-variate
  polynomial ring
  $K[y_{1,(1,0)}, y_{1,(0,1)}, y_{2,(1,0)}, y_{2,(0,1)}]$.  They are
  \begin{equation*}
    p_{(2,0)} = y_{1,(1,0)}, \qquad\quad
    p_{(1,1)} = y_{1,(0,1)} + y_{2,(1,0)}, \qquad \quad
    p_{(0,2)} = y_{2,(0,1)}.
  \end{equation*}
  For example, the linear form $p_{(1,1)}$ expresses that the monomial
  $x_{1}x_{2}$ appears from multiplication of $f_{1}$ with $x_{2}$ or
  $f_{2}$ with~$x_{1}$.  We now evaluate these linear forms on
  $g = 3x_{1}^{2} + 5x_{1}x_{2} + 7x_{2}^{2}$.  This polynomial can be
  expressed using the generators as $g = g_{1}f_{1} + g_{2}f_{2}$,
  where $g_{1}=3x_{1}+ax_{2}$ and $g_{2}=bx_{1}+7x_{2}$ with $a+b=5$.
  We have
  \begin{equation*}
    p_{(2,0)} (g_{1},g_{2}) = 3,\qquad\quad
    p_{(1,1)} (g_{1},g_{2}) = a + b = 5, \qquad\quad
    p_{(0,2)} (g_{1},g_{2}) = 7.
  \end{equation*}
\end{example}

\begin{example}\label{ex:principalId}
  Let $I=\< x_{1}^2+x_{1}x_{2}+x_{2}^2\>\subseteq K[x_{1},x_{2}]$.  In
  degree $4 = 2+2$ we have
  \begin{align*}
    p_{(4,0)} &= y_{(2,0)},  &
    p_{(3,1)} &= y_{(2,0)}+y_{(1,1)}, \\
    p_{(2,2)} &= y_{(2,0)}+y_{(1,1)}+y_{(0,2)}, & 
    p_{(1,3)} &= y_{(1,1)}+y_{(0,2)}, \\
    p_{(0,4)} &= y_{(0,2)}.
  \end{align*}
  By Lemma~\ref{lem:p-alpha}, every polynomial in $I^{(4)}$ has the
  form
  \begin{equation*}
    p_{(4,0)}x_{1}^4+p_{(3,1)}x_{1}^3x_{2}+p_{(2,2)}x_{1}^2x_{2}^2+p_{(1,3)}x_{1}x_{2}^3+p_{(0,4)}x_{2}^4.
  \end{equation*}
  If that polynomial is $g(x_{1}^{2}+x_{1}x_{2}+x_{2}^{2})$ with
  $g=g_{(2,0)}x_{1}^{2}+g_{(1,1)}x_{1}x_{2}+g_{(0,2)}x_{2}^{2}\in
  K[x_{1},x_{2}]^{(2)}$, then it equals
  \begin{equation*}
    g_{(2,0)}x_{1}^4+(g_{(2,0)}+g_{(1,1)})x_{1}^3x_2+(g_{(2,0)}+g_{(1,1)}+g_{(0,2)})x_{1}^2x_2^2
    +(g_{(1,1)}+g_{(0,2)})x_{1}x_2^3+g_{(0,2)}x_2^4.
  \end{equation*}
\end{example}

A first indication how this can yield shortness is the following
trivial observation: A monomial $x^{\alpha}$ does not appear in any
polynomial of an ideal if and only if the corresponding $p_{\alpha}$
is zero.
A more insightful approach uses Lemma~\ref{lem:p-alpha}: the existence
of a polynomial $f\in I^{(t+d)}$ with few terms is equivalent to the
existence of coefficients
$(g_1,\dots,g_r)\in\bigoplus_{i=1}^{r}K[x_1,\dots,x_n]^{(d)}$ such
that $p_{\alpha}(g_1,\dots,g_r)$ vanishes for many~$\alpha$.  In the
following lemma we dualize this to spans of the~$p_{\alpha}$.

\begin{lem}\label{lem:palphaDim} The vector space
  $I^{(t+d)}$ does not contain an $s$-short polynomial if and only if
  for all $S\subseteq M_{t+d}^n$ with $|S|=|M_{t+d}^n|-s$ it holds
  \begin{equation*}
    \Span\set{p_{\alpha}:\alpha\in S} = \Span\left\{p_{\alpha}:\alpha\in M_{t+d}^n\right\}.
  \end{equation*}
\end{lem}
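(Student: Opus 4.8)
The plan is to recast both sides of the claimed equivalence as statements about the common kernels of the linear forms $p_{\alpha}$, and then to pass between kernels and spans using the standard annihilator duality in finite dimensions. Throughout I write $V = \bigoplus_{i=1}^{r}K[x_1,\dots,x_n]^{(d)}$ and $N = |M_{t+d}^n|$, and I note that $V$ is finite dimensional, so the dualities invoked below are available.

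First I would translate the existence of a short polynomial into a statement about kernels. By Lemma~\ref{lem:p-alpha} every element of $I^{(t+d)}$ has the form $\sum_{\alpha\in M_{t+d}^n}p_{\alpha}(v)x^{\alpha}$ for some $v\in V$, and its support is $\set{\alpha : p_{\alpha}(v)\neq 0}$. Hence $I^{(t+d)}$ contains an $s$-short polynomial if and only if there is some $v\in V$ with at least $N-s$ of the values $p_{\alpha}(v)$ equal to zero and at least one value $p_{\alpha}(v)$ nonzero. Choosing any subset $S\subseteq M_{t+d}^n$ consisting of exactly $N-s$ of the vanishing indices, this is equivalent to the existence of $S$ with $|S|=N-s$ together with a vector $v\in\bigcap_{\alpha\in S}\ker p_{\alpha}$ that does not lie in $\bigcap_{\alpha\in M_{t+d}^n}\ker p_{\alpha}$. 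Negating, $I^{(t+d)}$ fails to contain an $s$-short polynomial exactly when for every such $S$ one has the inclusion $\bigcap_{\alpha\in S}\ker p_{\alpha}\subseteq\bigcap_{\alpha\in M_{t+d}^n}\ker p_{\alpha}$; since $S\subseteq M_{t+d}^n$ forces the reverse inclusion, this is in fact an equality of common kernels.

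It remains to convert equality of common kernels into equality of spans. The common kernel $\bigcap_{\alpha\in S}\ker p_{\alpha}$ is precisely the annihilator in $V$ of the subspace $\Span\set{p_{\alpha}:\alpha\in S}\subseteq V^{*}$, and likewise for $M_{t+d}^n$. In finite dimensions the annihilator operation is inclusion-reversing and satisfies $\dim W + \dim W^{\circ}=\dim V$ for the annihilator $W^{\circ}$; consequently, for nested subspaces, two spans have the same annihilator if and only if they coincide. Applying this to the nested pair $\Span\set{p_{\alpha}:\alpha\in S}\subseteq\Span\set{p_{\alpha}:\alpha\in M_{t+d}^n}$ converts the kernel equality obtained in the previous step into the desired span equality, closing the chain of equivalences.

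I expect the only delicate point to be the bookkeeping around the nonzero and ``at most $s$'' conditions: one must check that the index set $S$ may be taken of exactly size $N-s$ (a polynomial with fewer than $s$ terms merely has more vanishing coefficients, from which any $N-s$ may be selected), and that the requirement that the polynomial be nonzero corresponds precisely to $v\notin\bigcap_{\alpha\in M_{t+d}^n}\ker p_{\alpha}$. Once these translations are pinned down, the remainder is pure linear-algebra duality with no computation.
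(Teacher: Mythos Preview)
Your proof is correct and follows essentially the same approach as the paper: translate the absence of an $s$-short polynomial into the equality of common kernels $\bigcap_{\alpha\in S}\ker p_{\alpha}=\bigcap_{\alpha\in M_{t+d}^n}\ker p_{\alpha}$ for all $S$ of size $|M_{t+d}^n|-s$, and then invoke finite-dimensional annihilator duality to pass to the span equality. Your write-up is simply more explicit about the bookkeeping and the duality step than the paper's terse version.
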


\begin{proof}
  For each $\alpha\in M_{t+d}^n$ let $V_{\alpha}$ be the kernel of the
  $K$-linear map $p_{\alpha}$.  Consider the statement \emph{If
    $|M^{n}_{t+d}|-s$ terms of $f\in I^{(t+d)}$ vanish (meaning all
    but $s$ terms), then $f$ is the zero polynomial}.  This statement
  is equivalent to the equalities
  \begin{equation}\label{e:intersectKer}
    \bigcap_{\alpha \in S} V_{\alpha} = \bigcap_{\alpha\in
      M^{n}_{t+d}} V_{\alpha}\quad \text{for all}\quad S\subseteq
    M_{t+d}^n,\; |S|=|M_{t+d}^n|-s.
  \end{equation}
  The statement is also equivalent to $I^{(t+d)}$ not containing a
  nonzero polynomial with at most $s$ terms.  The statement of the
  lemma follows by applying vector space duality
  to~\eqref{e:intersectKer}.
\end{proof}

\begin{remark}\label{rem:hyperplanes}
  The \emph{hyperplanes} of a matroid are the codimension one flats,
  that is, the maximal subsets that do not span everything.  There is
  a representable matroid whose vectors are the
  $p_{\alpha}$ for all $\alpha\in M_{t+d}^{n}$.  The hyperplanes of that
  matroid are the maximal sets $S\subseteq M_{t+d}^{n}$ such that
  \[
    \Span\set{p_{\alpha}:\alpha\in S} \subsetneq
    \Span\left\{p_{\alpha}:\alpha\in M_{t+d}^n\right\}.
  \]
  By Lemma~\ref{lem:palphaDim}, the existence of a short polynomial is
  tied to the existence of a large hyperplane: an $s$-short polynomial
  exists if and only if a hyperplane of size at least
  $|M_{t+d}^{n}|-s$ exists.
\end{remark}

\begin{example}
  Applying Lemma~\ref{lem:palphaDim} to $I^{(4)}$ from
  Example~\ref{ex:principalId} we find
  \begin{equation*}
    \Span\set{p_{(4,0)},p_{(3,1)},p_{(2,2)},p_{(1,3)},p_{(0,4)}}=\Span\set{y_{(2,0)},y_{(1,1)},y_{(0,2)}}.
  \end{equation*}
  This vector space is $3$-dimensional. Does $I^{(4)}$ contain a
  binomial?  One quickly checks that
  $\dim\Span\set{p_{(4,0)},p_{(2,2)},p_{(1,3)}} = 2$ and in particular
  \begin{equation*}
    \Span\set{p_{\alpha}:\alpha\in S} \neq \Span\left\{p_{\alpha}:\alpha\in M_{t+d}^n\right\}.
  \end{equation*}
  Therefore with $s=2$ and $S:=\set{(4,0),(2,2),(1,3)}$ one has
  $|S|= 5-2 =|M_4^2|-s$.  By Lemma~\ref{lem:palphaDim}, the vector
  space $I^{(4)}$ contains a binomial.  And indeed, we find
  \begin{equation*}
    x^4-xy^3=(x^2-xy)(x^2+xy+y^2)\in I^{(4)}.
  \end{equation*}
  However, $I^{(4)}$ does not contain a monomial, because any four
  $p_{\alpha}$ span a $3$-dimensional space.
\end{example}

\begin{example}
  We view Example~\ref{ex:principalId} from the perspective of matroid
  theory.
  We order the $p_{\alpha}$ and the columns as
  $1\colon (4,0), 2\colon (3,1), \dots, 5\colon (0,4)$.  The rows are
  ordered as $1\colon (2,0), 2\colon (1,1), 3\colon (0,2)$.  The
  representable matroid of the $p_{\alpha}$ is then described by the
  matrix
  \[
    \begin{pmatrix}
      1 & 1 & 1 & 0 & 0 \\
      0 & 1 & 1 & 1 & 0 \\
      0 & 0 & 1 & 1 & 1 
    \end{pmatrix}.
  \]
  This matroid has the bases
  $ \set{123, 234, 124, 145, 245, 125, 135}, $ and the circuits
  $\set{134, 1245, 235}$.  The hyperplanes are
  $\set{12, 134, 15, 24, 235, 45}$.  The two ``large'' (3-element)
  hyperplanes indicate the presence of binomials in the ideal.
\end{example}

The following reformulation of Lemma~\ref{lem:palphaDim} turns out to
be useful.

\begin{lem}\label{lem:criterionTK}
  The vector space $I^{(t+d)}$ does not contain a nonzero $s$-short
  polynomial if and only if for every $S\subseteq M_{t+d}^n$ with
  $|S|=|M_{t+d}^n|-s$ and any $\beta\in M_{t+d}^{n}$, there exists a
  linear combination
  $p_{\beta}= \sum_{\alpha\in S}r_{\alpha}p_{\alpha}$,
  $r_{\alpha}\in K$.
\end{lem}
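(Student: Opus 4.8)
The plan is to deduce this directly from Lemma~\ref{lem:palphaDim} by reinterpreting the span equality appearing there as a collection of membership statements. First I would fix a subset $S\subseteq M_{t+d}^n$ with $|S|=|M_{t+d}^n|-s$ and observe that the inclusion $\Span\set{p_{\alpha}:\alpha\in S}\subseteq\Span\set{p_{\alpha}:\alpha\in M_{t+d}^n}$ holds automatically, since $S\subseteq M_{t+d}^n$. Thus the two spans agree precisely when the reverse inclusion holds, and because the right-hand space is spanned by the $p_{\beta}$ with $\beta\in M_{t+d}^n$, this reverse inclusion is equivalent to the condition that each generator $p_{\beta}$ lies in $\Span\set{p_{\alpha}:\alpha\in S}$.

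The second step is to unwind membership in a span into the existence of a linear combination. By definition, $p_{\beta}\in\Span\set{p_{\alpha}:\alpha\in S}$ holds if and only if there are scalars $r_{\alpha}\in K$ with $p_{\beta}=\sum_{\alpha\in S}r_{\alpha}p_{\alpha}$. Combining this with the previous paragraph, for a fixed $S$ the equality $\Span\set{p_{\alpha}:\alpha\in S}=\Span\set{p_{\alpha}:\alpha\in M_{t+d}^n}$ is equivalent to the assertion that every $\beta\in M_{t+d}^n$ admits such a linear combination.

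Finally I would quantify over all admissible $S$: the condition of Lemma~\ref{lem:palphaDim} asks for span equality for every $S$ of the prescribed size, and by the pointwise equivalence just established this is the same as requiring, for every such $S$ and every $\beta\in M_{t+d}^n$, a linear combination $p_{\beta}=\sum_{\alpha\in S}r_{\alpha}p_{\alpha}$. Since Lemma~\ref{lem:palphaDim} identifies the former with the absence of a nonzero $s$-short polynomial in $I^{(t+d)}$, the claimed equivalence follows. There is no real obstacle here; the statement is a purely formal reformulation, and the only point requiring minor care is the bookkeeping of the nested quantifiers over $S$ and $\beta$, ensuring that the ``for all $S$'' in both lemmas ranges over the same family of subsets.
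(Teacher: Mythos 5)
Your argument is correct and is exactly the intended one: the paper gives no explicit proof, presenting the lemma as a direct reformulation of Lemma~\ref{lem:palphaDim}, and your unwinding of the span equality into membership statements and then into explicit linear combinations is precisely that reformulation made explicit.
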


We close this section with some simple consequences.
\begin{prop}\label{prop:shortnessKnown}
  Let $I$ be a nonzero ideal.  Suppose that the shortest nonzero
  polynomial in $I^{(t+d)}$ has $s$ terms. Then
  \begin{enumerate}[(i)]
  \item\label{it:sK1}
    $\dim \Span\set{p_{\alpha}: \alpha\in M_{t+d}^n} \le
    |M_{t+d}^n|-s+1$,
  \item\label{it:sK2} For each $\gamma \in M^{d}_{n}$ let $n(\gamma)$
    be the number of $\alpha \in M^{t+d}_{n}$ such that
    $y_{i,\gamma} \in \supp (p_{\alpha})$.  That is,
    $n(\gamma) = |\set{\alpha: y_{i,\gamma} \in \supp
      (p_{\alpha})}|$.  Then
    \[
      s \ge \min_{\gamma\in M^{d}_{n} : n(\gamma) \neq 0} n(\gamma).
    \]
  \end{enumerate}
\end{prop}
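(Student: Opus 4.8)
The plan is to prove the lower bound in part~(ii) by tracking how a single nonzero coefficient of a minimal representation propagates to many monomials. By Lemma~\ref{lem:p-alpha} a nonzero $g\in I^{(t+d)}$ is $g=\sum_{\alpha\in M_{t+d}^n}p_\alpha(g_1,\dots,g_r)\,x^\alpha$ for some nonzero $(g_1,\dots,g_r)$, and its number of terms equals $|\{\alpha:p_\alpha(g_1,\dots,g_r)\neq0\}|$; thus $s$ is the smallest such count over all nonzero coefficient vectors $v=(g_{i,\gamma})$. Lemma~\ref{lem:palpha-formula} shows that the dual coordinate $y_{i,\gamma}$ occurs in $p_\alpha$ exactly when $\alpha-\gamma\in\supp(f_i)$, so a fixed coordinate $y_{i,\gamma}$ enters precisely the $n(\gamma)$ linear forms $p_{\gamma+\beta}$ with $\beta\in\supp(f_i)$. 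The hope is that switching on one such coordinate forces at least $\min_{\gamma}n(\gamma)$ of the $p_\alpha(v)$ to be nonzero.

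Concretely, I would fix a shortest nonzero $g=\sum_i g_if_i$ with coefficient vector $v\neq0$ and pick a coordinate $(i_0,\gamma_0)$ with $g_{i_0,\gamma_0}\neq0$. This coordinate contributes $f_{i_0,\beta}\,g_{i_0,\gamma_0}$ to $p_{\gamma_0+\beta}(v)$ for every $\beta\in\supp(f_{i_0})$, that is, to exactly the $n(\gamma_0)$ positions $\gamma_0+\beta$. To turn ``contributes to'' into ``is nonzero at'', I would choose $(i_0,\gamma_0)$ extremally with respect to a generic linear functional $\omega$ on $\NN_0^n$: take $\gamma_0$ to be $\omega$-maximal among the exponents occurring in the $g_i$. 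Then the monomials $\gamma_0+\beta$ with $\beta$ on the $\omega$-maximal face of $\supp(f_{i_0})$ cannot be produced from any other pair $(i,\gamma)$ with $g_{i,\gamma}\neq0$, so the corresponding $p_{\gamma_0+\beta}(v)$ survive and give genuine terms of $g$. The remaining step would be to count these protected positions and check that their number reaches $\min_{\gamma:n(\gamma)\neq0}n(\gamma)$.

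The main obstacle is cancellation, and here it is decisive rather than merely technical. The extremal argument only protects the positions $\gamma_0+\beta$ with $\beta$ on the $\omega$-maximal face of $\supp(f_{i_0})$; at the interior positions, contributions from other coordinates $g_{i,\gamma}$ of $v$ can annihilate $p_{\gamma_0+\beta}(v)$, so a support-based count recovers only the vertices of a Newton polytope, typically far fewer than $n(\gamma_0)=|\supp(f_{i_0})|$. Moreover the single-generator multiples $x^\gamma f_i\in I^{(t+d)}$ already have exactly $n(\gamma)$ terms, so $s\le\min_{\gamma}n(\gamma)$ holds unconditionally; the stated inequality would therefore force the equality $s=\min_{\gamma}n(\gamma)$, which fails already for the ideal of Example~\ref{ex:principalId}, where the binomial $x^4-xy^3\in I^{(4)}$ gives $s=2$ while $n(\gamma)=3$ for every $\gamma$. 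I would thus expect any proof to hinge on a global non-cancellation input, and I flag that it is in fact the reverse inequality $s\le\min_\gamma n(\gamma)$ that holds in this generality.
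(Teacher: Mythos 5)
You are right, and what you have found is an error in the statement rather than a gap in your own argument: part~(\ref{it:sK2}) as printed is false, and the counterexample you give is the paper's own Example~\ref{ex:principalId}. As you observe via Lemma~\ref{lem:palpha-formula}, the coefficient of $y_{i,\gamma}$ in $p_{\alpha}$ is $f_{i,\alpha-\gamma}$, so $y_{i,\gamma}\in\supp(p_{\alpha})$ precisely when $\alpha-\gamma\in\supp(f_i)$; hence $n(\gamma)=|\supp(f_i)|$ for every $\gamma$ (the statement leaves the index $i$ implicit, but no reading changes the conclusion), and the monomial multiple $x^{\gamma}f_i\in I^{(t+d)}$ has exactly $n(\gamma)$ terms. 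Therefore $s\le\min_{\gamma:\,n(\gamma)\neq 0}n(\gamma)$ holds unconditionally, and for $I=\langle x_1^2+x_1x_2+x_2^2\rangle$ in degree $4$ one has $s=2<3=\min_{\gamma}n(\gamma)$, refuting the printed inequality. Your first two paragraphs (the extremal, Newton-polytope attempt) are a fair account of why no proof of the printed direction could exist, but they become moot once the counterexample is in hand.

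What you could not see blind is that the paper's own proof in fact establishes the reverse inequality, i.e.\ exactly the statement you flag as the true one. Its contradiction hypothesis is that $n(\gamma)<s$ for a $\gamma$ realizing the minimum, which is the negation of $s\le\min_{\gamma}n(\gamma)$, not of the printed $s\ge\min_{\gamma}n(\gamma)$; from it a contradiction is derived via Lemma~\ref{lem:criterionTK}, by choosing $S$ of size $|M_{t+d}^n|-s+1$ disjoint from the at most $s-1$ exponents $\alpha$ with $y_{i,\gamma}\in\supp(p_{\alpha})$ and comparing coefficients of $y_{i,\gamma}$ in a relation $p_{\beta}=\sum_{\alpha\in S}r_{\alpha}p_{\alpha}$. (That proof also contains a slip: it requires $\set{\alpha: y_{i,\gamma}\in\supp(p_{\alpha})}\cap S=\set{\beta}$, after which its final sentence fails for $\alpha=\beta\in S$; the intersection must be taken empty, which the count $n(\gamma)\le s-1$ permits.) So the inequality sign in part~(\ref{it:sK2}) is an erratum; your one-line argument via $x^{\gamma}f_i$ proves the corrected statement more simply than the paper's duality argument, and the two are genuinely different routes to the same corrected fact. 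One omission on your side: you never address part~(\ref{it:sK1}). It is correct and immediate from Lemma~\ref{lem:palphaDim}: since $I^{(t+d)}$ contains no nonzero $(s-1)$-short polynomial, every $S$ with $|S|=|M_{t+d}^n|-s+1$ already spans $\Span\set{p_{\alpha}:\alpha\in M_{t+d}^n}$, which bounds the dimension by $|S|$.
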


\begin{proof}
  By Lemma~\ref{lem:palphaDim}, for any $S\subseteq M_{t+d}^n$ of
  cardinality $|M_{t+d}^n|-s+1$, the set
  $\set{p_{\alpha}\colon \alpha\in S}$ generates
  $\Span\set{p_{\alpha}: \alpha\in M_{t+d}^n}$, so that (\ref{it:sK1})
  follows.

  Since $I$ is not the zero ideal, there exists a variable
  $y_{i,\gamma}$ such that
  $\set{\alpha: y_{i,\gamma}\in\supp(p_{\alpha})}\neq\emptyset$.
  Therefore the minimum exists and is positive.  For a contradiction,
  assume that $n(\gamma)<s$ for a $\gamma$ that realizes the minimum.
  Fix $\beta \in M_{t+d}^{n}$ such that
  $y_{i,\gamma}\in\supp(p_{\beta})$.  Now choose a subset
  $S\subseteq M_{t+d}^n$ with $|S|=|M_{t+d}^n|-s+1$ and
  $\set{\alpha: y_{i,\gamma}\in\supp(p_{\alpha})}\cap S =
  \set{\beta}$.  This is possible since
  $|\set{\alpha:
    y_{i,\gamma}\in\supp(p_{\alpha})}\setminus\set{\beta}|\leq s-2$.
  By Lemma~\ref{lem:criterionTK} there exists a relation
  $p_{\beta} = \sum_{\alpha\in S}r_{\alpha}p_{\alpha}$.  This is a
  contradiction because $p_{\beta}$ contains $y_{i,\gamma}$ while
  $p_{\alpha}$ do not, when $\alpha\in S$.
\end{proof}


\section{Short polynomials in determinantal ideals}
\label{sec:shortPoly-in-detIdeals}

Let $X=(x_{ij})$ be an $m\times n$-matrix of indeterminates over $K$
and $K[X]$ the polynomial ring with indeterminates $X$ and
coefficients in the field~$K$.  For any $0< t \le m,n$, denote by
\( I_t = \< t\text{-minors of } X\> \) the \emph{determinantal ideal}
generated by the $t$-minors, i.e.\ the $t\times t$ subdeterminants
of~$X$.  We prove the following theorem.

\begin{thm}\label{thm:DeterAtMost}
  The shortness of $I_{t}$ is at least $\frac{t!}{2}+1$.
  That is, $I_{t}$ does not contain a nonzero
  polynomial with at most $\frac{t!}{2}$ terms.
\end{thm}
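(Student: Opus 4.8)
The plan is to prove the theorem by induction on $t$, exploiting the recursive structure that relates $t$-minors to $(t-1)$-minors via Laplace expansion, combined with the linear-algebraic machinery of Section~\ref{sec:short-and-linearforms}. The base case $t=1$ is trivial: a $1$-minor is just a variable $x_{ij}$, the ideal $I_1$ is the irrelevant maximal ideal, and $1!/2 = 1/2 < 1$, so the claim that $I_1$ contains no polynomial with at most $0$ terms is vacuous (the only such ``polynomial'' is zero). The real content begins at $t=2$, where one must show that no nonzero binomial vanishes on all rank-one matrices.

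\medskip

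For the inductive step I would work in a fixed degree, most naturally the degree-$t$ component $I_t^{(t)}$, which is spanned by the $t$-minors themselves (here $d=0$), and then bootstrap to higher degrees. The key idea is to reduce to a smaller matrix. Suppose for contradiction that $f \in I_t$ is a nonzero polynomial with at most $t!/2$ terms. By homogeneity of the generators and the grading machinery in Lemma~\ref{lem:p-alpha}, I may assume $f$ is homogeneous and lives in some $I_t^{(t+d)}$. The crucial move is a \emph{substitution/specialization} argument: pick a variable, say $x_{mn}$ in the corner, and consider the effect of setting it to a generic scalar or of passing to the coefficient structure. Laplace expansion along the last row or column writes each $t$-minor involving that row/column as a signed sum of $x_{mj}$ times $(t-1)$-minors of the complementary $(m-1)\times(n-1)$ (or appropriately reduced) submatrix. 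The plan is to extract from $f$, by a coefficient-selection operator of the type $\tilde{p}_\alpha$, a nonzero polynomial in the smaller variable set that lies in $I_{t-1}$ for a submatrix, while controlling the number of terms.

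\medskip

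More precisely, I would use Proposition~\ref{prop:shortnessKnown}\eqref{it:sK2}, which bounds the shortness below by the minimal ``column weight'' $n(\gamma)$ counting how many $p_\alpha$ involve a fixed dual variable $y_{i,\gamma}$. In the determinantal setting, fixing a generator $f_i$ (a specific $t$-minor) and a monomial multiplier, one counts the monomials $x^\alpha$ that can arise; the combinatorics of how a product of a $t$-minor with a monomial distributes across the $t!$ permutation terms of the determinant is what produces the $t!/2$ bound. The factor $1/2$ should emerge because the $t!$ monomials of a single $t$-minor come in pairs related by the sign structure, or because transpositions of the minor's row/column indices yield a two-to-one collision pattern; the inductive hypothesis for $I_{t-1}$ supplies the $(t-1)!/2$ factor, and one gains a factor of $t$ (losing a factor of $2$ only once) from expanding along the extra row. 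Reconciling these factors to land exactly on $t!/2$ is the delicate bookkeeping step.

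\medskip

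The main obstacle I anticipate is precisely this counting: showing that \emph{every} nonzero element of $I_t$, not merely the minors themselves, must involve at least $t!/2 + 1$ distinct monomials, since a clever combination $\sum g_i f_i$ could in principle cancel many terms. The abstract criterion (Lemma~\ref{lem:palphaDim}, Lemma~\ref{lem:criterionTK}) reframes this as a statement about spans of the $p_\alpha$ and the absence of large matroid hyperplanes, but translating the permutation-sign combinatorics of determinants into a clean lower bound on $n(\gamma)$ — uniform over all choices of generator and multiplier, and stable under the inductive reduction to a submatrix — will require choosing the right variable $x_{ij}$ to specialize and verifying that the specialization does not collapse $f$ to zero. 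I expect the proof to hinge on a careful choice of a ``distinguished'' monomial in $f$ (for instance, one whose exponent is maximal with respect to a well-chosen term order) together with the fact that the leading terms of the $t$-minors, under such an order, behave predictably, allowing the inductive descent to the $(t-1)$-minor case while tracking the term count exactly.
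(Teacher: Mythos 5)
Your proposal is a research plan rather than a proof: the two steps that would carry the entire weight of the argument --- the ``delicate bookkeeping'' that produces the factor $\frac{t!}{2}$, and the verification that the specialization ``does not collapse $f$ to zero'' --- are exactly the points you defer, and both routes you sketch for them break down. First, the inductive frame is already off at the bottom: for $t=2$ the theorem asserts only that $I_2$ contains no monomial, whereas you set out to show that no binomial vanishes on the rank-one locus; that statement is false, since the $2$-minors themselves are binomials lying in $I_2$. More importantly, the Laplace/specialization step does not reduce $I_t$ to $I_{t-1}$ of a submatrix: setting $x_{mn}$ to a scalar does not carry the rank-$(t-1)$ locus to a rank-$(t-2)$ locus of a smaller matrix, a nonzero $\frac{t!}{2}$-short $f\in I_t$ may well specialize to $0$, and no term order is exhibited under which the leading-term bookkeeping closes. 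Second, Proposition~\ref{prop:shortnessKnown}\eqref{it:sK2} cannot supply the lower bound. The quantity $n(\gamma)$ is exactly the number of terms of the explicit ideal element $x^{\gamma}f_i$, so $\min_{\gamma}n(\gamma)$ is an \emph{upper} bound for the shortness (the inequality as printed is reversed relative to what the proposition's proof establishes; compare the paper's own example $I=\langle x_1^2+x_1x_2+x_2^2\rangle$, where $s=2$ while every $n(\gamma)=3$). For $I_t$ one computes $n(\gamma)=t!$ for every $(I,J)$ and $\gamma$, so taking the proposition at face value would yield $s\ge t!$ and settle Conjecture~\ref{conj:It-t!short} outright --- a sure sign the tool is being pointed in the wrong direction.

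The paper's actual argument involves no induction on $t$ and no specialization. It applies Lemma~\ref{lem:criterionTK} directly: given a set $S$ of exponents missing only $\frac{t!}{2}$ elements and a target $\beta$, it grows a set $V\ni\beta$ by repeatedly pairing each dual variable $y_{(I,J),\alpha-E_{\sigma}}$ occurring in some $p_{\alpha}$ with $\alpha\in V$ against a permutation $\pi$ of opposite sign for which $\alpha-E_{\sigma}+E_{\pi}\in S$; such a $\pi$ exists precisely because the $\frac{t!}{2}$ excluded exponents are matched in number by the alternating subgroup of $S_{I,J}$. The resulting relation $\sum_{\alpha\in V}p_{\alpha}=0$ expresses $p_{\beta}$ over $S$. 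So the constant $\frac{t!}{2}$ is the order of the alternating group, not an artifact of a row expansion; if you want to salvage your plan, the sign-pairing of the $t!$ monomials of a single minor is the combinatorial fact to isolate.
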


Theorem~\ref{thm:DeterAtMost} implies the theorem from the
introduction since the irreducible algebraic set of matrices of rank
at most $t-1$ is cut out by the prime ideal generated by all
$t$-minors.
Our proof strategy consists of explicitly describing the linear forms
$p_{\alpha}$.  This is possible by the combinatorial nature of
determinantal ideals.  To do so, we introduce some notation. Let
$I\subseteq[m]$ and $J\subseteq[n]$ be index sets of size~$t$, and
$
  S_{I,J} \defas \set{\sigma\colon I\to J\text{ bijective}}.
$
Elements of $S_{I,J}$ are \textit{permutations} and the
\textit{signum} of $\sigma\in S_{I,J}$ is
$
  \sgn(\sigma) \defas \sgn(\psi\circ\sigma\circ\phi)
$
where $\phi\colon [t]\to I$ and $\psi\colon J\to[t]$ are the unique
bijective and order preserving maps defined by~$\sigma$.  The
\textit{permutation matrix} $E_{\sigma}\in\set{0,1}^{m\times n}$ of
$\sigma$ has $(i,j)$-entry equal to $1$ if and only if $i\in I$ and
$\sigma(i)=j$.

Now fix a nonnegative integer (degree)~$d$.  In this setting the
linear forms~$p_{\alpha}$ use the variables $y_{(I,J),\gamma}$ where
$I\subseteq [m]$, $J\subseteq [n]$ with $\abs{I} = \abs{J}=t$ and
$\gamma \in M^{m\times n}_{d}$. 

\begin{lem}\label{lem:palpha-formula-It}
  Let $\alpha\in M_{t+d}^{m\times n}$.  For the ideal $I_{t}$, we have
  \begin{equation*}
    p_{\alpha}
     =\sum_{\substack{\sigma\in S_{I,J}\\E_{\sigma}\leq\alpha}}\sgn(\sigma)y_{(I,J),\alpha-E_{\sigma}},
  \end{equation*}
  where $I\subseteq[m]$ and $J\subseteq[n]$ both have
  cardinality $t$ and $E_{\sigma}\leq\alpha$ is defined entrywise.
\end{lem}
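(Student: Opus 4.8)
The plan is to specialize the general coefficient formula of Lemma~\ref{lem:palpha-formula} to the case where the fixed degree-$t$ generators $f_1,\dots,f_r$ are precisely the $t$-minors of $X$. In the matrix setting the generating set is indexed by pairs $(I,J)$ with $I\subseteq[m]$, $J\subseteq[n]$ and $\abs{I}=\abs{J}=t$; the generator attached to $(I,J)$ is the minor $\det(X_{I,J})$, and the dual variables are written $y_{(I,J),\gamma}$ in place of $y_{i,\gamma}$. So all that is needed is to compute the monomial-basis coefficients of each minor and feed them into Lemma~\ref{lem:palpha-formula}.

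First I would expand each minor by the Leibniz formula,
\begin{equation*}
  \det(X_{I,J})=\sum_{\sigma\in S_{I,J}}\sgn(\sigma)\prod_{i\in I}x_{i,\sigma(i)},
\end{equation*}
and read off the exponent of the monomial $\prod_{i\in I}x_{i,\sigma(i)}$: it is exactly the permutation matrix $E_{\sigma}$. The point here is that each such monomial is squarefree with exactly $t$ factors, so the correspondence $\sigma\mapsto E_{\sigma}$ is a bijection between $S_{I,J}$ and the monomials occurring in $\det(X_{I,J})$. Hence, writing $\det(X_{I,J})=\sum_{\beta}f_{(I,J),\beta}X^{\beta}$, the coefficient $f_{(I,J),\beta}$ equals $\sgn(\sigma)$ when $\beta=E_{\sigma}$ for a (necessarily unique) $\sigma\in S_{I,J}$, and is $0$ otherwise.

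Substituting these coefficients into Lemma~\ref{lem:palpha-formula} kills every term except those with $\beta=E_{\sigma}$. For such a term the constraint $\beta+\gamma=\alpha$ forces $\gamma=\alpha-E_{\sigma}$, which is a legitimate exponent in $M_{d}^{m\times n}$ exactly when $E_{\sigma}\le\alpha$ entrywise; the degree bookkeeping is automatic, since $\abs{E_{\sigma}}=t$ and $\abs{\alpha}=t+d$ give $\abs{\alpha-E_{\sigma}}=d$. This turns the formula into a sum over pairs $(I,J)$ and permutations $\sigma\in S_{I,J}$ with $E_{\sigma}\le\alpha$, carrying the coefficient $\sgn(\sigma)$ on $y_{(I,J),\alpha-E_{\sigma}}$.

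The last step is to recognize that the pair $(I,J)$ is redundant data: the support of $E_{\sigma}$ recovers both its row set $I$ and its column set $J$, so no two permutations coming from different index pairs share a permutation matrix. Therefore the double sum over $(I,J)$ and $\sigma$ collapses into the single sum over permutations with $E_{\sigma}\le\alpha$ displayed in the statement. I expect no real obstacle here beyond careful bookkeeping of the index sets; the only point that deserves an explicit sentence is the injectivity of $\sigma\mapsto E_{\sigma}$, which simultaneously guarantees that the Leibniz coefficients cause no collisions and that the final single-sum form is unambiguous.
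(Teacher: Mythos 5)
Your proposal is correct and follows essentially the same route as the paper: identify the Leibniz coefficients of each minor as $\sgn(\sigma)$ supported on the permutation matrices $E_{\sigma}$, and substitute into Lemma~\ref{lem:palpha-formula}, with the constraint $\beta+\gamma=\alpha$ collapsing to the condition $E_{\sigma}\le\alpha$. Your extra remark on the injectivity of $\sigma\mapsto E_{\sigma}$ across index pairs $(I,J)$ is a harmless clarification of bookkeeping the paper leaves implicit.
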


\begin{proof}
  We translate the definitions from
  Section~\ref{sec:short-and-linearforms} to this case.  Let
  $f_{(I,J)} = \det (x_{i,j})_{i\in I,j\in J}$.  This is a homogenous
  polynomial of degree $t$ with coefficients
  \begin{equation*}
    f_{(I,J),\beta}
    = \begin{cases} \sgn(\sigma), & \text{if $E_{\sigma}=\beta$ for some $\sigma\in S_{I,J}$}, \\
      0, & \text{otherwise.}\end{cases}
  \end{equation*}
  That is,
  $f_{(I,J)}=\sum_{\beta\in M_{t}^{m\times n}}f_{(I,J),\beta}x^{\beta}$.  Now
  apply Lemma~\ref{lem:palpha-formula} to the generators $f_{(I,J)}$
  of $I_{t}$ and find
  \begin{equation*}
    p_{\alpha}
    =\sum_{I,J}\sum_{\substack{\beta\in M_t^{m\times n},\gamma\in M_d^{m\times n}\\\beta+\gamma=\alpha}}f_{(I,J),\beta}y_{(I,J),\gamma}
    =\sum_{\substack{\sigma\in S_{I,J}\\E_{\sigma}\leq\alpha}}\sgn(\sigma)y_{(I,J),\alpha-E_{\sigma}}.  \qedhere
  \end{equation*}
\end{proof}

We want to apply Lemma~\ref{lem:criterionTK} after understanding the
linear relations between the~$p_{\alpha}$. First we examine the
procedure in an example.

\begin{example}\label{ex:some-relation}
  Suppose that $X$ is a $2\times 3$-matrix, and consider
  $I_{2}^{(3)}$, the vector space of homogeneous polynomials of
  degree~$3$ in the ideal $I_{2}$, generated by all $2$-minors of~$X$.
  That is, $d=1$, $m=t=2$, and $n=3$.  A nonzero linear form is
  \begin{equation*}
    p_{\left(\begin{smallmatrix}1&0&1\\0&1&0\end{smallmatrix}\right)}
    = y_{(\set{1,2},\set{1,2}),\left(\begin{smallmatrix}0&0&1\\0&0&0\end{smallmatrix}\right)}
    - y_{(\set{1,2},\set{2,3}),\left(\begin{smallmatrix}1&0&0\\0&0&0\end{smallmatrix}\right)}.
  \end{equation*}
  To find linear relations among the $p_{\alpha}$ we search for
  $p_{\alpha'}$ which also use the indeterminates
  $y_{(\set{1,2},\set{1,2}),\left(\begin{smallmatrix}0&0&1\\0&0&0\end{smallmatrix}\right)}$
  and
  $y_{(\set{1,2},\set{2,3}),\left(\begin{smallmatrix}1&0&0\\0&0&0\end{smallmatrix}\right)}$.
  As it turns out, the first is also found in the support of exactly
  one other linear form, namely
  $p_{\left(\begin{smallmatrix}0&1&1\\1&0&0\end{smallmatrix}\right)}$.
  The second indeterminate is also contained in the support of exactly
  one other linear form:
  $p_{\left(\begin{smallmatrix}1&1&0\\0&0&1\end{smallmatrix}\right)}$.
  And fortunately, these three make a nontrivial relation
  \begin{equation}\label{eq:example-rel}
    p_{\left(\begin{smallmatrix}1&0&1\\0&1&0\end{smallmatrix}\right)} +
    p_{\left(\begin{smallmatrix}1&1&0\\0&0&1\end{smallmatrix}\right)} +
    p_{\left(\begin{smallmatrix}0&1&1\\1&0&0\end{smallmatrix}\right)}=0.
  \end{equation}
  Moreover, the uniqueness of the latter two linear forms implies that
  this is (up to a scalar) the only relation containing
  $p_{\left(\begin{smallmatrix}1&0&1\\0&1&0\end{smallmatrix}\right)}$.
\end{example}

Our proof of the lower bound of the shortness of $I_{t}$ is a
generalization of the idea in the previous example.  To apply
Lemma~\ref{lem:criterionTK}, we pick some linear form $p_{\beta}$ and
determine a relation
$\sum_{\alpha\in S}r_{\alpha}p_{\alpha}=p_{\beta}$.  We proceed by
picking for each indeterminate in the support of $p_{\beta}$ a linear
form that eliminates this indeterminate in $p_{\beta}$.  In contrast
to the previous example, this does not immediately give a relation in
the general setting, so we iterate this step.

In the proof of Theorem~\ref{thm:DeterAtMost}, we can pick for each
indeterminate in the support of a $p_{\beta}$ a linear form in which
this indeterminate occurs with the other sign.  This is the crucial
technical observation used to establish the bound $\frac{t!}{2}+1$ on
the shortness of~$I_{t}$.

\begin{proof}[Proof of Theorem~\ref{thm:DeterAtMost}]
  By Lemma~\ref{lem:criterionTK}, we have to show that for every
  $d\geq 0$, every $S\subseteq M_{t+d}^{m\times n}$ with
  $|S|=|M_{t+d}^{m\times n}|-\frac{t!}{2}$, and every
  $\beta\in M_{t+d}^{m\times n}$, there exists a linear combination
  $\sum_{\alpha\in S}r_{\alpha}p_{\alpha}=p_{\beta}$.  So fix all
  those quantities.  If $\beta\in S$ the result follows.  Assume
  therefore that $\beta\not\in S$.

  Let $V_{0}\defas \set{\beta}$.  We iteratively define an increasing
  sequence
  \begin{equation*}
    V_{0}\subseteq V_{1} \subseteq V_{2} \subseteq \dots
  \end{equation*}
  of subsets of $S\cup\set{\beta}$.  Assume now that $V_{k}$ is
  defined for some $k\geq 0$.  Then for every
  $y_{(I,J),\alpha-E_{\sigma}}$ appearing in some $\supp (p_{\alpha})$
  with $\alpha \in V_{k}$ we pick one permutation
  $\pi_{\alpha,\sigma}$ according to the following rules:
  \begin{enumerate}
  \item If $\pi_{\alpha,\sigma}$ is already defined because
    $y_{(I,J),\alpha-E_{\sigma}}$ occured before, do nothing.
  \item If there exist a permutation
    $\tau\in S_{I,J}\setminus\set{\sigma}$ such that
    $\alpha-E_{\sigma}+E_{\tau}\in V_{k}$ and
    $\pi_{\alpha-E_{\sigma}+E_{\tau},\tau}=\sigma$, then set
    $\pi_{\alpha,\sigma}:=\tau$.
  \item\label{it:alternating} If not, choose $\pi_{\alpha,\sigma}\in S_{I,J}$ such that
    $\alpha-E_{\sigma}+E_{\pi_{\alpha,\sigma}}\in S$ and
    $\sgn(\sigma)=-\sgn(\pi_{\alpha,\sigma})$.
  \end{enumerate}
  The rules allow many different assignments of $\pi_{\alpha,\sigma}$
  and each suffices for the argument.  Picking $\pi_{\alpha,\sigma}$
  in step \ref{it:alternating} is possible because the cardinality of
  $S$ is equals that of the alternating subgroup of~$S_{I,J}$.  Using
  all the choices made, set
  \begin{equation*}
    V_{k+1}:=
    V_{k}\cup\set{\alpha-E_{\sigma}+E_{\pi_{\alpha,\sigma}} \colon
      \alpha\in V_{k} \text{ and } y_{(I,J),\alpha-E_{\sigma}}\in\supp(p_{\alpha})}
  \end{equation*}
as well as
\begin{equation*}
V:=\bigcup_{k=0}^{\infty}V_{k}.
\end{equation*}
We claim that $\sum_{\alpha\in V}p_{\alpha}=0$.  To prove this claim,
consider an arbitary indeterminate $y_{(I,J),\gamma}$ that appears in
the sum, i.e.\ such that the set
\[
  V_{y_{(I,J),\gamma}} \defas \set{\alpha\in V \colon
    y_{(I,J),\gamma}\in\supp(p_{\alpha})}
\]
is not empty.  The construction of $V$ shows that
$V_{y_{(I,J),\gamma}}$ is a disjoint union of subsets of the form
$ \set{\alpha, \alpha-E_{\sigma}+E_{\pi_{\alpha,\sigma}}}, $ where
$\sigma\in S_{I,J}$ satifies $\gamma=\alpha-E_{\sigma}$.  In
particular, we can pick pairwise distinct
$p_{\alpha_{1}},\dots,p_{\alpha_{l}}\in V_{y_{(I,J),\gamma}}$ and
permutations $\sigma_{1},\dots,\sigma_{l}\in S_{I,J}$ such that the
linear forms
$p_{\alpha_{1}-E_{\sigma_{1}}+E_{\pi_{\alpha_{1},\sigma_{1}}}},\dots,p_{\alpha_{l}-E_{\sigma_{l}}+E_{\pi_{\alpha_{l},\sigma_{l}}}}$
are pairwise distinct and $V_{y_{(I,J),\gamma}}$ is a disjoint union
\begin{equation*}
  V_{y_{(I,J),\gamma}}
   =\set{\alpha_{1},\dots,\alpha_{l}}
   \sqcup\set{\alpha_{1}-E_{\sigma_{1}}+E_{\pi_{\alpha_{1},\sigma_{1}}},\dotsc, \alpha_{l}-E_{\sigma_{l}}+E_{\pi_{\alpha_{l},\sigma_{l}}}}.
\end{equation*}
From this decomposition, it follows that the coefficient of
$y_{(I,J),\gamma}$ in 
$\sum_{\alpha\in V}p_{\alpha}$ equals
\begin{equation*}
  \sum_{i=1}^{l}\sgn(\sigma_{i})+\sgn(\pi_{\alpha_{i},\sigma_{i}})
  =\sum_{i=1}^{l}\sgn(\sigma_{i})-\sgn(\sigma_{i})
  =0.
\end{equation*}
Therefore the $y_{(I,J),\gamma}$-term and thus all terms in
$\sum_{\alpha\in V}p_{\alpha}$ vanish and
$\sum_{\alpha\in V}p_{\alpha}=0$ which proves the claim.  Now the
entire proof is finished since $\beta \in V$ and
$V\setminus \set{\beta} \subseteq S$.  Thus we get the required
expression
$ p_{\beta} = - \sum_{\substack{\alpha \in V\\ \alpha\neq \beta}}
p_{\alpha}.  $ for Lemma~\ref{lem:criterionTK}.
\end{proof}


\begin{remark}\label{r:breadt-first}
  In the proof of Theorem~\ref{thm:DeterAtMost}, let $G$ be the graph
  whose vertex set consists of all $\alpha$ such that $p_{\alpha}$ is
  not zero and edges between $p_{\alpha}$ and $p_{\alpha'}$ whenever
  those two linear forms share an indeterminate.  Starting from
  $p_{\beta}$ (or just any distinguished vertex) our proof first
  collects vertices adjacent to $p_{\beta}$ such that each
  indeterminate in $p_{\beta}$ is matched exactly once.  This step is
  repeated for every vertex until all of their indetermiantes are
  matched.  Consequently the proof implements a breadth-first search
  on this graph.  Figure~\ref{fig:graph_ex} contains the graph
  corresponding to degree three polynomials from
  Example~\ref{ex:some-relation}.  It seems plausible that more
  complicated but more efficient relations could be found by exploring
  a simplicial complex, so that in
  $p_{\beta} = -\sum_{\alpha} p_{\alpha}$ each indeterminate could
  appear more than once on each side of the equation.
\end{remark}

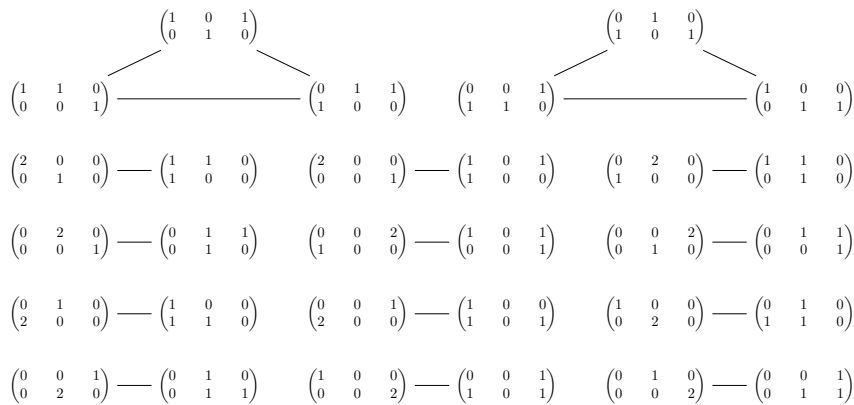
\begin{figure}
\begin{tikzpicture}[baseline= (a).base]
\node[scale=.55] (a) at (0,0){
  \begin{tikzcd}
  & \begin{pmatrix}1&0&1\\0&1&0\end{pmatrix} \arrow[dash]{dr} & & & \begin{pmatrix}0&1&0\\1&0&1\end{pmatrix} \arrow[dash]{dr} & \\
  \begin{pmatrix}1&1&0\\0&0&1\end{pmatrix}\arrow[dash]{ur} \arrow[dash]{rr} &    & \begin{pmatrix}0&1&1\\1&0&0\end{pmatrix} &
  \begin{pmatrix}0&0&1\\1&1&0\end{pmatrix}\arrow[dash]{ur} \arrow[dash]{rr} &    & \begin{pmatrix}1&0&0\\0&1&1\end{pmatrix} \\
  \begin{pmatrix}2&0&0\\0&1&0\end{pmatrix} \arrow[dash]{r} & \begin{pmatrix}1&1&0\\1&0&0\end{pmatrix} &
  \begin{pmatrix}2&0&0\\0&0&1\end{pmatrix} \arrow[dash]{r} & \begin{pmatrix}1&0&1\\1&0&0\end{pmatrix} &
  \begin{pmatrix}0&2&0\\1&0&0\end{pmatrix} \arrow[dash]{r} & \begin{pmatrix}1&1&0\\0&1&0\end{pmatrix} \\
  \begin{pmatrix}0&2&0\\0&0&1\end{pmatrix} \arrow[dash]{r} & \begin{pmatrix}0&1&1\\0&1&0\end{pmatrix} &
  \begin{pmatrix}0&0&2\\1&0&0\end{pmatrix} \arrow[dash]{r} & \begin{pmatrix}1&0&1\\0&0&1\end{pmatrix} &
  \begin{pmatrix}0&0&2\\0&1&0\end{pmatrix} \arrow[dash]{r} & \begin{pmatrix}0&1&1\\0&0&1\end{pmatrix} \\
  \begin{pmatrix}0&1&0\\2&0&0\end{pmatrix} \arrow[dash]{r} & \begin{pmatrix}1&0&0\\1&1&0\end{pmatrix} &
  \begin{pmatrix}0&0&1\\2&0&0\end{pmatrix} \arrow[dash]{r} & \begin{pmatrix}1&0&0\\1&0&1\end{pmatrix} &
  \begin{pmatrix}1&0&0\\0&2&0\end{pmatrix} \arrow[dash]{r} & \begin{pmatrix}0&1&0\\1&1&0\end{pmatrix} \\
  \begin{pmatrix}0&0&1\\0&2&0\end{pmatrix} \arrow[dash]{r} & \begin{pmatrix}0&1&0\\0&1&1\end{pmatrix} &
  \begin{pmatrix}1&0&0\\0&0&2\end{pmatrix} \arrow[dash]{r} & \begin{pmatrix}0&0&1\\1&0&1\end{pmatrix} &
  \begin{pmatrix}0&1&0\\0&0&2\end{pmatrix} \arrow[dash]{r} & \begin{pmatrix}0&0&1\\0&1&1\end{pmatrix} 
\end{tikzcd}
};
\end{tikzpicture}
\caption{The graph $G$ for $I_2^{(3)}$, where $X$ is a $2\times 3$-matrix.
  In this special case, every connected component gives rise to
  a linear relation. For instance, we find~\eqref{eq:example-rel} in
  its top left corner. \label{fig:graph_ex}}
\end{figure}

\begin{remark}\label{rem:signs}
  In the proof of Theorem~\ref{thm:DeterAtMost}, for each
  indeterminate in the support of a $p_{\beta}$ we pick a linear form
  in which this indeterminate occurs with the opposite sign.  The
  proof therefore constructs linear relations
  $p_{\beta}= \sum_{\alpha}r_{\alpha}p_{\alpha}$ in which all nonzero
  coefficients $r_{\alpha}$ equal $-1$.  This is a strong restriction,
  and in general there should exist more complicated relations giving
  better bounds.  In particular, describing all relations would yield
  the exact bound.
\end{remark}

Since the generators of $I_{t}$ are $t!$-short, we state the following
\begin{conj}\label{conj:It-t!short}
$I_{t}$ is $t!$-short.
\end{conj}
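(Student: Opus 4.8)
The plan is to prove the matching lower bound. Since the $t$-minors have exactly $t!$ terms, the shortness of $I_{t}$ is at most $t!$, so the conjecture asserts equality and its content is the bound $\geq t!$: no nonzero $f\in I_{t}$ has fewer than $t!$ terms. I would first pin down the concrete shape of the problem. By Lemma~\ref{lem:p-alpha} and Lemma~\ref{lem:palpha-formula-It}, the component $I_{t}^{(t+d)}$ is spanned by the monomial multiples $x^{\gamma}f_{(I,J)}$ with $|\gamma|=d$, and each such element is a $t!$-nomial, since multiplying the $t!$ distinct monomials of a minor by $x^{\gamma}$ produces no cancellation. Thus the task is exactly to show that the minimum number of terms of a nonzero element of $\Span\set{x^{\gamma}f_{(I,J)}}$ is $t!$. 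Dually, in the language of Lemma~\ref{lem:criterionTK}, I must show that for every $d$, every $\beta$, and every $S\subseteq M_{t+d}^{m\times n}$ whose complement has size $t!-1$, one has $p_{\beta}\in\Span\set{p_{\alpha}:\alpha\in S}$; equivalently, the representable matroid of the $p_{\alpha}$ from Remark~\ref{rem:hyperplanes} has cogirth $t!$ (its smallest cocircuit has $t!$ elements).

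A useful first reduction is multigrading. For $\alpha\in M_{t+d}^{m\times n}$ let $r(\alpha)\in\NN_{0}^{m}$ and $c(\alpha)\in\NN_{0}^{n}$ be its vectors of row sums and column sums. Every monomial in $x^{\gamma}f_{(I,J)}$ has the same pair $(r,c)=(r(\gamma)+\mathbf 1_{I},\,c(\gamma)+\mathbf 1_{J})$, where $\mathbf 1_{I}$ is the indicator of $I$, so $I_{t}$ is a multigraded ideal. Because the monomial basis is compatible with this grading, any $f\in I_{t}$ splits as a sum of multihomogeneous pieces with disjoint supports, each again in $I_{t}$; hence a shortest polynomial may be taken multihomogeneous, and it suffices to prove the bound $t!$ inside each fixed multidegree $(\rho,\kappa)$. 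Two special cases guide the general argument. For $d=0$ distinct minors occupy distinct multidegrees $(\mathbf 1_{I},\mathbf 1_{J})$ and have disjoint supports, so the bound is immediate. For $t=2$ the generators are binomials, so in each multidegree the $x^{\gamma}f_{(I,J)}$ are the edges of a graph on the monomials and their span consists of the coordinate vectors summing to zero on each connected component, whose minimum weight is $2=2!$. This already proves the conjecture for $t\le 2$.

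The heart of the matter, and where I expect the real difficulty, is $t\ge 3$ together with $d\ge 1$, where the generators are genuine $t!$-nomials whose monomial supports overlap and can cancel. Theorem~\ref{thm:DeterAtMost} reaches only $t!/2$ because its breadth-first search cancels each indeterminate of $p_{\beta}$ against a \emph{single} partner of opposite sign, and exactly $t!/2$ permutations have each sign; to permit $t!-1$ deletions one can no longer find such a single partner and must cancel using linear combinations with coefficients other than $-1$, exactly as anticipated in Remark~\ref{rem:signs}. Concretely, by Lemma~\ref{lem:palpha-formula-It} the linear dependencies $\sum_{\alpha}c_{\alpha}p_{\alpha}=0$ among the $p_{\alpha}$ are precisely the solutions of the alternating-sum equations $\sum_{\sigma\in S_{I,J}}\sgn(\sigma)\,c_{\gamma+E_{\sigma}}=0$, one per triple $(I,J,\gamma)$, and these span the space of linear forms vanishing on $I_{t}^{(t+d)}$; what is needed is a description of this dependency space fine enough to express any $p_{\beta}$ after deleting $t!-1$ of the $p_{\alpha}$. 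I would attack the finest (squarefree) multidegree first: when $\rho=\kappa=(1,\dots,1)$ and $m=n=t+d$ the monomials are the permutation matrices, identified with $S_{t+d}$, the codewords become local alternating sums $\sum_{\sigma\colon I\to J}\sgn(\sigma)\,e_{\gamma+E_{\sigma}}$, and the question becomes the minimum support of a nonzero vector in an explicit $S_{t+d}$-submodule of the group algebra $K[S_{t+d}]$. The main obstacle is precisely to rule out that some combination of these overlapping $t!$-nomials survives with fewer than $t!$ terms; this already contains the principal case $m=n=t$, namely that $\det_{t}$ is the sparsest nonzero element of $\langle\det_{t}\rangle$, which I expect to be the crux and likely the single hardest step.
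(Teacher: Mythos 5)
This statement is a \emph{conjecture}: the paper offers no proof of it, only the weaker lower bound $t!/2+1$ of Theorem~\ref{thm:DeterAtMost} together with the trivial upper bound $t!$ coming from the generators. So there is no proof in the paper to compare yours against, and the question is simply whether your proposal closes the gap between $t!/2$ and $t!$. It does not. Your preliminary reductions are all correct: the shortest polynomial may be taken multihomogeneous with respect to the $\NN_0^{m}\times\NN_0^{n}$ grading by row and column sums, since the minors are multihomogeneous and distinct multidegrees have disjoint monomial supports; the case $d=0$ is immediate because each multidegree $(\mathbf 1_{I},\mathbf 1_{J})$ contains only scalar multiples of a single minor; and the graph argument for $t=2$ is fine (though note that $t\le 2$ is already covered by Theorem~\ref{thm:DeterAtMost}, since $2!/2+1=2$). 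The dual reformulation as a cogirth/minimum-weight statement and the identification of the dependency space with the solutions of the local alternating-sum equations are likewise accurate.

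The genuine gap is that the entire mathematical content of the conjecture sits in the case you explicitly defer: $t\ge 3$ and $d\ge 1$, and already the principal case that the determinant of a generic $t\times t$ matrix is the sparsest nonzero element of the ideal it generates. You correctly diagnose why the paper's method stalls at $t!/2$ --- the breadth-first cancellation pairs each term of $p_{\beta}$ with a single partner of opposite sign, and only $t!/2$ permutations carry each sign, which is exactly the limitation anticipated in Remark~\ref{rem:signs} --- but you supply no replacement: no construction of the more general relations $p_{\beta}=\sum_{\alpha\in S}r_{\alpha}p_{\alpha}$ with coefficients other than $-1$, and no lower bound on the minimum support of the relevant submodule of $K[S_{t+d}]$. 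Reformulating the problem inside the group algebra is a plausible line of attack, but as written it is a restatement, not an argument. What you have is a sound reduction and a research plan; the conjecture remains open after your proposal, and any referee would have to reject this as a proof.
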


When Conjecture~\ref{conj:It-t!short} is resolved in one way or the
other, it would be interesting to compare with permanental ideals,
which in many ways are more complicated than determinantal ideals.

\medskip
 
\bibliographystyle{amsplain}
\bibliography{shortpolynomials}

\bigskip \medskip

\noindent
\footnotesize {\bf Authors' addresses:}

\smallskip

\noindent Thomas Kahle, OvGU Magdeburg, Germany,
{\tt thomas.kahle@ovgu.de}

\noindent Finn Wiersig, University of Oxford, UK,
{\tt finn.wiersig@maths.ox.ac.uk}

\end{document}